 \documentclass[final]{opt2026}

\usepackage{mathtools}
\usepackage{microtype}
\usepackage{tikz}
\usepackage{booktabs}
\usetikzlibrary{arrows.meta}

\hypersetup{colorlinks=true,citecolor=blue!55!black,linkcolor=blue!55!black,urlcolor=blue!55!black}
\definecolor{huberblue}{HTML}{2F6B9A}
\definecolor{huberorange}{HTML}{D97706}
\definecolor{huberred}{HTML}{B42318}
\definecolor{hubergray}{HTML}{4B5563}

\newcommand{\G}{\mathcal G}
\newcommand{\F}{\mathcal F}
\newcommand{\A}{\mathcal A}

\title[Can Acceleration in Gradient-Norm Minimization Be Anytime?]
{Can Acceleration in Gradient-Norm Minimization Be Anytime?\\
Sharp Last-Iterate Limits in Smooth Convex Optimization}

\optauthor{%
\Name{Pierre Vernimmen} \Email{pierre.vernimmen@uclouvain.be}\\
\addr UCLouvain, ICTEAM/INMA, Louvain-la-Neuve, Belgium
\AND
\Name{Fran{\c{c}}ois Glineur} \Email{francois.glineur@uclouvain.be}\\
\addr UCLouvain, ICTEAM/INMA and CORE, Louvain-la-Neuve, Belgium}

\begin{document}
\maketitle

\begin{abstract}
In smooth convex optimization, the gradient norm is a directly observable measure of stationarity. Accelerating a first-order method that minimizes the gradient norm is known to be more delicate than accelerating the minimization of function values. Optimal accelerated methods such as OGM-G \citep{kimfessler2021} are known to exist for any prescribed finite horizon, but their coefficients depend explicitly on the length of that horizon, i.e. the number of iterations. 

We ask what kind of acceleration is feasible when the stopping horizon is unknown to the method, i.e. for horizon-independent methods. \citet{diakonikolaswang2022} conjectured that an $\Omega(N^{-1})$ lower bound on the squared gradient norm holds at every horizon $N$ for any nonadaptive, horizon-independent linear-span first-order method. We disprove this pointwise conjecture by exhibiting a method that achieves near-$N^{-2}$ last-iterate guarantees on a density-one set of horizons. We show instead that an $\Omega(N^{-1})$ lower bound must hold for infinitely many horizons. More precisely, if $\G_N(\A)$ denotes the bound on the squared gradient norm after $N$ iterations for a method $\A$, we prove that $\limsup_{N\to\infty}N\G_N(\A) \ge 1/2$ for any method $\A$. This bound is sharp: the constant $1/2$ is exactly attained by the horizon-independent gradient-descent schedule of \citet{rotaruglineurpatrinos2026}. 

In addition, we show that the above two extreme behaviors cannot be achieved by the same method: any method $\A$ with an $o(N^{-1})$ guarantee on a subsequence of iterates must satisfy $\limsup_N N\G_N(\A)=\infty$. In contrast, best-so-far output admits a uniform $O(N^{-2})$ guarantee.
\end{abstract}\vspace*{-.3cm}

\section{Introduction}

Some optimization runs in machine learning and other applications are not limited by a fixed compute or iteration budget, but instead stop when the last iterate achieves some convergence metric. The gradient norm is a natural stopping criterion: it is observable, does not require knowledge of $f_\star$, and measures first-order stationarity \citep{nesterov2012,diakonikolaswang2022,lanouyangzhang2026}. In this work, we focus on convergence bounds on the squared gradient norm after $N$ iterations, with respect to the objective accuracy $f(x_0)-f_\star$ of the initial iterate. The standard bound $O(N^{-1})$ of gradient descent (GD) can be accelerated: the method OGM-G \citep{kimfessler2021} achieves the optimal $O(N^{-2})$ order, but it is horizon-dependent: its coefficients explicitly depend on the number $N$ of iterations.
This paper asks whether acceleration is possible when $N$ is not known in advance.

\paragraph{Model and anytime requirement.}
We consider deterministic linear-span first-order methods, where each iterate is computed with a linear combination of all observed past gradients: \vspace*{-.2cm}
\begin{equation*}
    x_k=x_0-\frac1L\sum_{i=0}^{k-1}\beta_{i,k}\nabla f(x_i),
    \qquad k\ge1.
    \tag{LS}\label{eq:span}\vspace*{-.2cm}
\end{equation*}
Gradient descent with stepsizes $h_k$ corresponds to the case where $\beta_{i,k}=h_i$ for all $i < k$. 
A method is \emph{nonadaptive} if its coefficients are fixed before the run and independent of all observed oracle information; they may depend on known parameters and on $i,k$, and also on its prescribed number of iterations $N$ (i.e. different coefficients are used for each value of $N$). Adaptive methods are outside the scope of this work. A method is \emph{horizon-independent} (HI) when coefficients do not depend on $N$, i.e. the method is defined with a single list of coefficients $(\beta_{i,k})_{i<k}$. A guarantee is said to be \emph{anytime} if it holds at every iterate. 

Let $\F_{0,L}(\mathbb R^d)$ denote the class of convex differentiable functions admitting a minimizer and whose gradient is $L$-Lipschitz, and write $f_\star:=\min f$. For a method $\A$, define the convergence bound after $N$ iterations $\G_N(\A)$ as\vspace*{-.25cm}
\begin{equation*}
    \G_N(\A):=\sup_{\substack{d\ge1,\ f\in\F_{0,L}(\mathbb R^d),\ x_0\in\mathbb R^d, f(x_0)>f_\star}}
    \frac{\|\nabla f(x_N)\|^2}{L(f(x_0)-f_\star)}.
    \tag{Perf}\label{eq:metric}
\end{equation*}
We write $\G_N$ when $\A$ is clear. 
Note that the worst-case objective $f$ achieving the bound $\G_N$ may depend on $N$.

\paragraph{Question and contributions.}
Conjecture~1 of \citet{diakonikolaswang2022} predicts the existence of a constant 
$C>0$ such that every nonadaptive, horizon-independent method of the form \eqref{eq:span} satisfies $\G_N(\A)\ge C/N$ at every horizon $N$. We show that this pointwise statement is \emph{false}; more strongly, near-$N^{-2}$ last-iterate guarantees can hold on a density-one set. The sharp extrema are
\[
    \inf_\A\liminf_{N\to\infty}N\G_N(\A)=0 
       \quad\text{and}\quad
    \inf_\A\limsup_{N\to\infty}N\G_N(\A)=\frac12.
\]
Yet we show that no method can realize both: a finite $\limsup_{N\to\infty} N\G_N$ forces a positive value for  $\liminf_{N\to\infty}N\G_N$. Equivalently, existence of an $o(N^{-1})$ subsequence forces an infinite value for $\limsup_{N\to\infty}N\G_N$. Best-so-far output, by contrast, admits uniform $O(N^{-2})$. Table~\ref{tab:landscape} below collects these statements (where $\omega(N)$ denotes any nondecreasing function tending to infinity).
 
\begin{table}[ht!]
\centering
\small
\begin{tabular}{@{}llll@{}}
\toprule
Setting & Output & Guarantee / barrier & Source \\
\midrule
Known horizon & last iterate & $\G_N(\text{OGM-G})\le4/(N+1)^2$ &  \citet{kimfessler2021} \\
GD with $h_k>0$ & last iterate & no $o(N^{-1})$ anytime rate & \citet{tsai2026} \\
\addlinespace
HI linear span & last iterate & $\inf_\A\limsup_NN\G_N(\A)=1/2$ & \textbf{Thm.~\ref{thm:lower}}; \citet{rotaruglineurpatrinos2026} \\
\quad + finite $\limsup$ & last iterate & $\liminf_NN\G_N(\A)>0$ & \textbf{Cor.~\ref{cor:dichotomy}} (this work) \\
\quad + regular growth & last iterate & $\liminf_NN\G_N(\A)\ge1/2$ & \textbf{Prop.~\ref{prop:regular}} (this work)\\
\addlinespace
HI linear span & last iterate & $\G_N(\A)\le\omega(N)/N^2$ on a density-one set & \textbf{Thm.~\ref{thm:frequency}} (this work)\\
HI linear span & best-so-far & $\G_N^{\mathrm{best}}(\A)\le64/(N+1)^2$ & \textbf{Prop.~\ref{prop:best}} (this work)\\
\bottomrule
\end{tabular}
\caption{\small\textbf{Anytime gradient-norm guarantees and barriers for nonadaptive methods}. 
}
\label{tab:landscape}
\end{table}

\paragraph{Context and related work.}
Classical acceleration targets function values: Nesterov's method achieves the optimal $O(N^{-2})$ rate \citep{nesterov1983method}, with sharper worst-case constants given by OGM \citep{kimfessler2016,drori2017}. For gradient norms, horizon-dependent methods such as OGM-G achieve the accelerated $O(N^{-2})$ order \citep{kimfessler2018generalizing,kimfessler2021}, closely connected to Performance Estimation \citep{droriteboulle2014,taylor2017} and H-duality \citep{kimetal2023hduality}; their horizon dependence was emphasized by \citet{leeparkryu2021}. Constructions with longer stepsizes can also improve GD at prescribed horizons \citep{grimmer2024,altschulerparrilo2025,grimmershuwang2025}. 
\citet{kornowskishamir2024} then posed the anytime-acceleration question for GD in function value, subsequently answered positively by
\citet{zhangetal2025}. Recent work has also progressively strengthened lower bounds for stepsize-only acceleration in objective value, in both the known-horizon and anytime settings \citep{tsai2026,machen2026,yeliu2026,jungetal2026}. \citet{kornowskishamir2026} showed that, for (stochastic) GD in
nonsmooth Lipschitz optimization, horizon-independent stepsize schedules must incur a polylogarithmic loss in the last-iterate rate.

Closest to our setting, \citet{tsai2026} prove that nonadaptive GD with arbitrary positive stepsizes cannot achieve an $o(N^{-1})$ anytime convergence rate. Earlier, \citet{arjevanishamir2016} ruled out acceleration in objective value for the more restrictive class of nonadaptive first-order methods with time-invariant coefficients. In this work, we study the broader class of nonadaptive, horizon-independent linear-span methods for which \citet{diakonikolaswang2022} stated their conjecture, allowing unrestricted signed coefficients and access to the full gradient history.

\section{The Huber mechanism}
\label{sec:huber}

The proof of our main $\limsup$ result relies on well-chosen Huber-type functions. One-dimensional hard instances suffice since the definition of $\G_N$ in \eqref{eq:metric} takes the supremum over all dimensions. In those instances we set $x_0=1$ and, for any threshold $0<\delta\le1$, we consider $f=L\phi_\delta$, where
\begin{equation*}
    \phi_\delta(x)=
    \begin{cases}
        x^2/2,&x\le\delta,\\
        \delta x-\delta^2/2,&x\ge\delta,
    \end{cases}
    \qquad\qquad
    \phi_\delta'(x)=\min\{x,\delta\}.
    \tag{Huber}\label{eq:huber}
\end{equation*}
This one-sided Huber function is convex and $1$-smooth, with a constant-gradient plateau where $x\ge\delta$. As long as iterates remain on that plateau, their gradients equal the threshold  $\delta$, so that all linear-span methods \eqref{eq:span} collapse to $x_k=1-\delta s_k$, where the displacement $s_k$ equals the sum of all $k$-th row coefficients $s_k:=\sum_{i=0}^{k-1}\beta_{i,k}$. Using this observation, one can prove the following two lower bounds.

\begin{lemma}[Plateau]\label{lem:plateau}
Given a nonadaptive horizon-independent  linear-span method $\A$, define $B_k:=\max_{0\le i\le k}s_i$, where $s_0=0$ and $s_k:=\sum_{i=0}^{k-1}\beta_{i,k}$.
Then for any horizon $N\ge1$ the bound in \eqref{eq:metric} must satisfy $\G_N(\A)\ge\frac{1}{B_N+1/2}$.
\end{lemma}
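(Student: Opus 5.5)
We use the one-dimensional Huber instance $f=L\phi_\delta$ with $x_0=1$, and choose the threshold $\delta$ small enough that every iterate up to step $N$ stays on the plateau. Here $f_\star=0$, attained at $x=0$, and $f(x_0)=L\phi_\delta(1)=L(\delta-\delta^2/2)$, which is positive for $0<\delta\le1$. Thus the denominator in \eqref{eq:metric} is $L^2(\delta-\delta^2/2)$. If $x_N\ge\delta$ then $\nabla f(x_N)=L\delta$, and therefore
\[
\G_N(\A)\ \ge\ \frac{L^2\delta^2}{L^2(\delta-\delta^2/2)}=\frac{\delta}{1-\delta/2}=\frac{1}{1/\delta-1/2}.
\]
So the task reduces to finding the largest admissible $\delta$, i.e.\ the smallest admissible $1/\delta$.

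The key step is an induction showing that all iterates $x_0,\dots,x_N$ lie on the plateau whenever $1-\delta B_N\ge\delta$, that is, whenever $1/\delta\ge B_N+1$. The base case holds because $x_0=1\ge\delta$. Suppose $x_0,\dots,x_{k-1}$ are on the plateau. Then every observed gradient equals $L\delta$, so by \eqref{eq:span} we get $x_k=1-\delta s_k$. Since the method is nonadaptive, the coefficients $\beta_{i,k}$ are fixed in advance, so this formula is legitimate. Moreover $s_k\le B_N$, hence $x_k\ge 1-\delta B_N\ge\delta$, which closes the induction. Note that negative $s_k$ only moves the iterate further out along the plateau, which is harmless.

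With the choice $1/\delta=B_N+1$ the bound above becomes $\frac{1}{B_N+1/2}$, which is exactly the claim. This choice is admissible because $B_N\ge s_0=0$, so $\delta=1/(B_N+1)\le1$, and $f(x_0)>f_\star$ holds as computed above. No limiting argument is needed: the boundary case $x_k=\delta$ is still on the plateau, since there $\phi_\delta'(x_k)=\min\{x_k,\delta\}=\delta$.

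The only delicate point is the induction that keeps the entire trajectory on the plateau, and in particular ensuring that intermediate iterates (not just $x_N$) never leave it. This is why the bound involves the running maximum $B_N$ rather than $s_N$ alone. Once that is in place, the rest is the short computation above.
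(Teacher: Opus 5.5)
Your proof is correct and follows the paper's argument essentially verbatim: the one-dimensional Huber instance with $x_0=1$ and threshold $\delta=(B_N+1)^{-1}$, an induction using $s_k\le B_N$ to keep $x_0,\dots,x_N$ on the plateau, and the normalized ratio $\delta^2/(\delta-\delta^2/2)=1/(B_N+1/2)$. Your additional checks are all correct and are details the paper leaves implicit: $\delta\le1$ via $B_N\ge s_0=0$, $f_\star=0$, and the boundary case $x_k=\delta$ still lying on the plateau.
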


\begin{lemma}[Threshold crossing]\label{lem:jump}
Given a nonadaptive horizon-independent  linear-span method $\A$, with $B_k$ as in Lemma~\ref{lem:plateau}, define the increment $\Delta_k:=B_k-B_{k-1}$ for all $k \ge 1$. 
Then for any horizon $N \ge 1$ for which $B_N$ sets a new record, i.e.\ when $\Delta_N>0$, the bound in \eqref{eq:metric} must satisfy $\G_N(\A) \ge\frac{(\Delta_N-1)^2}{B_{N-1}+1/2}$.
\end{lemma}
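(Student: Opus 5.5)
We prove the bound by plugging a single, well-chosen Huber instance \eqref{eq:huber} into \eqref{eq:metric}: set $x_0=1$, take $f=L\phi_\delta$, and choose $\delta$ so that the first $N$ iterates $x_0,\dots,x_{N-1}$ stay on the plateau. On the plateau every gradient equals $L\delta$, so the method collapses to $x_k=1-\delta s_k$. The idea is to make $\delta$ as large as the past records allow; the record-setting step at time $N$ then overshoots into the quadratic region (or beyond the minimizer), where the gradient magnitude is $|x_N|$.

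\textbf{Step 1: choice of $\delta$ and plateau invariance.} Choose $\delta:=1/(B_{N-1}+1)$. Since $s_0=0$, we have $B_{N-1}\ge0$, so $0<\delta\le1$ as required. We verify by induction on $k\le N-1$ that $x_k=1-\delta s_k\ge\delta$. The base case is $x_0=1\ge\delta$. For the inductive step, suppose $x_0,\dots,x_{k-1}$ all lie on the plateau. Then their gradients all equal $L\delta$, so the update \eqref{eq:span} gives $x_k=1-\delta s_k$. Using $s_k\le B_{N-1}$, this yields
\[
x_k\ \ge\ 1-\delta B_{N-1}\ =\ \delta .
\]
Note that $\phi_\delta'(x)=\min\{x,\delta\}=\delta$ holds even at the boundary point $x=\delta$, so the boundary case causes no trouble.

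\textbf{Step 2: the last iterate.} All gradients $\nabla f(x_0),\dots,\nabla f(x_{N-1})$ equal $L\delta$, so $x_N=1-\delta s_N$. Because $\Delta_N>0$ means $s_N=B_N$ is a new record, we get
\[
x_N=\frac{B_{N-1}+1-B_N}{B_{N-1}+1}=\delta(1-\Delta_N)\le\delta .
\]
Hence $\nabla f(x_N)=Lx_N$, since $\phi_\delta'(x)=x$ for every $x\le\delta$, including negative $x$. It follows that $\|\nabla f(x_N)\|^2=L^2\delta^2(\Delta_N-1)^2$.

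\textbf{Step 3: normalization.} We have $f_\star=0$, attained at $x=0$, and $f(x_0)-f_\star=L(\delta-\delta^2/2)>0$ because $\delta\le1$. Therefore
\[
\G_N(\A)\ \ge\ \frac{\delta^2(\Delta_N-1)^2}{\delta-\delta^2/2}
=\frac{(\Delta_N-1)^2}{1/\delta-1/2}
=\frac{(\Delta_N-1)^2}{B_{N-1}+1/2}.
\]

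The only delicate point is the plateau invariance of Step 1. It requires bounding every displacement $s_k$, not just the running maximum, by $B_{N-1}$; this holds because $B_{N-1}$ is defined as a maximum over $0\le i\le N-1$. We must also apply the gradient formula at $x_N$ correctly even when $x_N$ overshoots past $0$ (the case $\Delta_N>1$). This is consistent with $\phi_\delta'=\min\{x,\delta\}$, so both concerns are handled by the construction above.
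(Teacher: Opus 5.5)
Your proof is correct and follows the paper's argument essentially step for step. You use the same threshold $\delta=(B_{N-1}+1)^{-1}$, the same plateau induction via $s_k\le B_{N-1}$ for $k<N$, and the same observation that the record step gives $x_N=\delta(1-\Delta_N)$ on the quadratic branch, so $\phi_\delta'(x_N)=x_N$, followed by the identical normalization.
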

On a constant positive-gradient plateau, $B_N$ is the largest cumulative coefficient-row sum and hence the largest displacement toward the minimizer attained by time $N$. To prove Lemma~\ref{lem:plateau}, we choose \eqref{eq:huber} with threshold  $\delta=(B_N+1)^{-1}$, which gives
$x_k=1-\delta s_k\ge1-\delta B_N=\delta$ for every $k\le N$ (all iterates on the plateau); therefore $\G_N(\A) \ge\frac{\delta^2}{\delta-\delta^2/2}=\frac1{B_N+1/2}$. Likewise, when the new-record increment $\Delta_N$ is nonzero, choosing threshold 
$\delta=(B_{N-1}+1)^{-1}$ keeps all previous iterates on the plateau, while $x_N=1-\delta(B_{N-1}+\Delta_N)=\frac{1-\Delta_N}{B_{N-1}+1}$, which lies on the quadratic branch and yields the stated bound. These two choices of the Huber threshold  are illustrated in Figure~\ref{fig:huber}, and full details are given in Appendix~\ref{app:huberproofs}. For positive-stepsize GD, we have $B_N=\sum_{k<N}h_k$ and $\Delta_N=h_{N-1}$, recovering the Huber-type bounds of \citet{tsai2026}.

\begin{figure}[ht!]
\centering
\begin{minipage}[t]{0.495\linewidth}
\centering
{\small \textbf{(a) Plateau: insufficient progress}}\par\vspace{2mm}
\begin{tikzpicture}[x=1.10cm,y=.86cm,>=Latex,font=\footnotesize]
  \fill[huberblue!6] (0,-1.00) rectangle (.92,1.05);
  \fill[huberorange!9] (.92,-1.00) rectangle (4.35,1.05);
  \draw[->,hubergray] (0,0) -- (4.42,0) node[right] {$x$};
  \draw[huberred,very thick] (.92,-.23) -- (.92,.23);
  \node[below,huberred] at (.92,-.23) {$\delta$};
  \fill[huberblue] (3.88,0) circle (2.25pt);
  \node[above] at (3.88,.06) {$x_0$};
  \foreach \x in {3.30,2.75}{\fill[huberblue] (\x,0) circle (2.0pt);}
  \fill[huberblue] (2.18,0) circle (2.0pt);
  \node[above] at (2.18,.06) {$x_N$};
  \fill[huberblue] (1.48,0) circle (2.25pt);
  \node[above] at (1.48,.06) {$x_{j^\star}$};
  \node[text=hubergray] at (2.70,.72) {{$\phi_\delta'(x_i)=\delta$}};
  \draw[->,huberblue!75!black,thick] (3.65,-.40) -- (1.70,-.40);
  \node[text=huberblue!75!black] at (2.68,-.68) {displacement};
\end{tikzpicture}

\par\vspace{0.5mm}\footnotesize
$\delta=(B_N+1)^{-1}$: even the record iterate $x_{j^\star}$, with $s_{j^\star}=B_N$, remains on the plateau.
\end{minipage}
\hfill
\begin{minipage}[t]{0.495\linewidth}
\centering
{\small \textbf{(b) Threshold crossing: new record}}\par\vspace{2mm}
\begin{tikzpicture}[x=1.00cm,y=.86cm,>=Latex,font=\footnotesize]
  \fill[huberblue!6] (0,-1.00) rectangle (1.72,1.05);
  \fill[huberorange!9] (1.72,-1.00) rectangle (5.02,1.05);
  \draw[->,hubergray] (0,0) -- (5.10,0) node[right] {$x$};
  \draw[huberred,very thick] (1.72,-.23) -- (1.72,.23);
  \node[below,huberred] at (1.72,-.23) {$\delta$};
  \foreach \x in {3.08,3.62,4.18,4.70}{\fill[huberblue] (\x,0) circle (2.0pt);}
  \node[text=hubergray] at (3.7,.6) {previous iterates};
  \fill[huberred] (.62,0) circle (2.5pt);
  \node[above,huberred] at (.62,.06) {$x_N$};
  \draw[->,huberorange!90!black,very thick]
    (3.05,-.38) .. controls (2.48,-.82) and (1.18,-.82) .. (.70,-.22);
  \node[text=huberorange!90!black] at (2.02,-.82)
    {new record crosses threshold};
\end{tikzpicture}

\par\vspace{-0.5mm}\footnotesize
$\delta=(B_{N-1}+1)^{-1}$: previous iterates remain on the plateau, while the new record crosses the threshold.
\end{minipage}
\vspace{-.2cm}
\caption{\small \textbf{The two Huber traps.}}
\label{fig:huber}
\vspace{-.2cm}
\end{figure}

\section{Last-iterate lower bound on nonadaptive horizon-independent linear-span methods}
\label{sec:barrier}

The two Huber traps expose the obstruction to an anytime rate uniformly better than $O(N^{-1})$. Making the plateau gradient small requires fast growth of $B_N$, but fast record growth creates large increments $\Delta_N$, which will trigger threshold crossing. For intuition, consider first the \emph{regular growth} regime in which we assume that $B_N/N\to a$ and $\Delta_N\to a$ for some growth parameter $a\ge0$.

\begin{proposition}[Regular record growth]
\label{prop:regular}
If the record displacements in a nonadaptive horizon-independent  linear-span method have regular growth, then $\liminf_{N\to\infty}N\G_N\ge1/2$.
\end{proposition}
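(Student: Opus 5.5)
The proof combines the two Huber traps, Lemma~\ref{lem:plateau} and Lemma~\ref{lem:jump}, and splits according to the value of the growth parameter $a$. Regular growth means $B_N/N\to a$ and $\Delta_N\to a$. Each lemma gives a lower bound on $N\G_N$ whose limit is an explicit function of $a$. The plateau bound is good for small $a$ and the threshold-crossing bound is good for large $a$. The two curves meet exactly at $a=2$ with common value $1/2$, which explains the constant.

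\emph{Case $0\le a\le 2$ (plateau).} By Lemma~\ref{lem:plateau}, for every $N$,
\[
N\G_N\ \ge\ \frac{N}{B_N+1/2}\ =\ \frac{1}{B_N/N+1/(2N)}.
\]
Since $B_N/N\to a$, the right-hand side tends to $1/a\ge 1/2$. When $a=0$ it tends to $+\infty$; this only needs $B_N\ge0$, which holds because $s_0=0$. Hence $\liminf_N N\G_N\ge 1/a\ge1/2$.

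\emph{Case $a>2$ (threshold crossing).} Since $\Delta_N\to a>0$, there is $N_0$ such that $\Delta_N>0$ for all $N\ge N_0$. So every such horizon sets a new record and Lemma~\ref{lem:jump} applies at every large $N$, not only along a subsequence. This gives
\[
N\G_N\ \ge\ \frac{N(\Delta_N-1)^2}{B_{N-1}+1/2}.
\]
Moreover $B_{N-1}/N=\tfrac{N-1}{N}\cdot B_{N-1}/(N-1)\to a$ and $(\Delta_N-1)^2\to(a-1)^2$. Therefore $\liminf_N N\G_N\ge (a-1)^2/a$. The map $a\mapsto (a-1)^2/a=a-2+1/a$ is increasing for $a>1$ and equals $1/2$ at $a=2$. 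So this bound exceeds $1/2$ for all $a>2$.

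Combining the two cases gives $\liminf_N N\G_N\ge\max\{1/a,(a-1)^2/a\}\ge 1/2$ for every $a\ge0$, with equality only at $a=2$.

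There is no real obstacle here; the lemmas do all the work. The only points needing care are in the second case. First, the positivity of $\Delta_N$ must hold for \emph{all} large $N$, which follows from $\Delta_N\to a>0$, so that the bound controls the $\liminf$ rather than a $\limsup$. Second, the index shift from $B_{N-1}$ to $N$ in the denominator must be handled, which the factor $\tfrac{N-1}{N}\to1$ takes care of.
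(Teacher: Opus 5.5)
Your proposal is correct and follows the paper's argument: the plateau bound gives $\liminf_N N\G_N\ge 1/a$ (or $+\infty$ when $a=0$), the threshold-crossing bound gives $(a-1)^2/a$, and $\max\{1/a,(a-1)^2/a\}\ge 1/2$ with equality only at $a=2$. You also make explicit two details the paper's sketch leaves implicit: $\Delta_N\to a>0$ makes Lemma~\ref{lem:jump} applicable at every large $N$, so it controls the $\liminf$ rather than only a subsequence, and the $B_{N-1}$ index shift is harmless.
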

Observe that the plateau bound from Lemma~\ref{lem:plateau} gives $N\G_N \ge (\frac{B_N}{N}+\frac1{2N})^{-1}$ for all $N$. When $a=0$ this gives $N\G_N\to+\infty$. When $a>0$, the two bounds from Lemmas~\ref{lem:plateau} and~\ref{lem:jump} give $\liminf_NN\G_N\ge\max \{ {1/a,(a-1)^2/a}\}$, and this last quantity is never smaller than $\frac12$. Equality occurs when $a=2$, hence $B_N\simeq2N$ is critical. GD with convergent stepsizes and fixed-coefficient Heavy Ball ($|\beta|<1$) have regular record growth, while restart schedules need not; see Appendix~\ref{app:regular}. 

In general, without the regular growth assumption, we prove a similar bound for the $\limsup$:
\begin{theorem}[Tight anytime last-iterate lower bound]\label{thm:lower}
For any nonadaptive horizon-independent linear-span method $\A$ we have
\[    \limsup_{N\to\infty}N\G_N(\A)\ge\frac12.
\]
This lower bound already holds for one-dimensional objectives.
\end{theorem}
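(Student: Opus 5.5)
We argue by contradiction, combining the plateau bound (Lemma~\ref{lem:plateau}) with the threshold-crossing bound (Lemma~\ref{lem:jump}). Both are obtained from one-dimensional Huber instances, so the argument will also give the one-dimensional claim. Suppose $\limsup_{N}N\G_N(\A)<\tfrac12$. Then there are $c<\tfrac12$ and $N_0$ such that $\G_N(\A)\le c/N$ for all $N\ge N_0$. Write $u_N:=B_N/N$.

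\emph{Step 1 (plateau forces fast growth).} Lemma~\ref{lem:plateau} gives $\frac{1}{B_N+1/2}\le c/N$, that is, $B_N\ge N/c-\tfrac12$. Fix a small $\varepsilon>0$. Then $u_N\ge 1/c-\varepsilon$ for all large $N$, and $1/c>2$.

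\emph{Step 2 (crossing caps the increments).} For $N\ge N_0$, either $\Delta_N=0$, or Lemma~\ref{lem:jump} gives $(\Delta_N-1)^2\le c(B_{N-1}+\tfrac12)/N$. In both cases
\[
\Delta_N\le 1+\sqrt{c\,u_{N-1}\tfrac{N-1}{N}+\tfrac{c}{2N}}\le 1+\sqrt{c\,u_{N-1}}+r_N,
\]
with $r_N\to0$ uniformly in $u_{N-1}$. For the uniformity, use $\sqrt{x+y}\le\sqrt x+\sqrt y$ and $\tfrac{N-1}{N}\le1$, so that $r_N\le\sqrt{c/(2N)}$.

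\emph{Step 3 (incompatibility).} Let $h(u):=u-1-\sqrt{cu}$. This function is increasing for $u\ge c/4$, and $1/c-\varepsilon>c/4$. Moreover $h(1/c)=1/c-2>0$, so for $\varepsilon$ small there is $\eta>0$ with $h(u)\ge2\eta$ on $[1/c-\varepsilon,\infty)$. Take $N$ large enough that $r_N\le\eta$. Then $\Delta_N\le u_{N-1}-\eta$, and hence
\[
u_N=\frac{(N-1)u_{N-1}+\Delta_N}{N}\le u_{N-1}-\frac{\eta}{N}.
\]
Since $\sum 1/N=\infty$, telescoping gives $u_N\to-\infty$. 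This contradicts $u_N\ge 1/c-\varepsilon$ from Step 1. Therefore $\limsup_N N\G_N\ge\tfrac12$.

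The main obstacle is making Step 3 work without the regular-growth assumption of Proposition~\ref{prop:regular}: $B_N/N$ may oscillate, and records may occur only sporadically. The device above handles this, because it works at every $N$ (the case $\Delta_N=0$ is trivial) and only needs $u_{N-1}$ to lie in a region where $h$ has a uniform positive margin. The one delicate point is that this region must be unbounded above, which holds because $h$ is increasing past $c/4$. The critical value $\tfrac12$ appears exactly as the condition $1/c>2$, i.e.\ $h(1/c)>0$, matching the equality case $a=2$ of Proposition~\ref{prop:regular}.
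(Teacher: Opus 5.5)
Your proof is correct and is essentially the paper's argument: the same two Huber bounds, the same running-average recursion, the same function $u-1-\sqrt{cu}$, and the same harmonic-series divergence. The differences are minor. You normalize by $B_N/N$ and absorb the shift into an $O(N^{-1/2})$ error term, where the paper uses $(B_N+1/2)/N$. You also skip the ``no re-crossing'' half of Lemma~\ref{lem:scalar-growth}, since your Step~1 already confines $u_N$ to the region of uniform decrease; the paper instead isolates the quantitative bound $\limsup_N (B_N+1/2)/N\le a_\gamma$ so that it can be reused for Corollary~\ref{cor:dichotomy}.
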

Suppose that there exists some $\gamma<\frac12$ such that $N \G_N\le\gamma$ holds for all sufficiently large $N$. Write $u_N:=(B_N+1/2)/N$, which is a shifted running average of the record increments. Lemma~\ref{lem:jump} shows that threshold crossings cap each increment by $1+\sqrt{\gamma u_{N-1}}$. Let $a_\gamma$ be the unique positive fixed-point solution of the equation $u=1+\sqrt{\gamma u}$. Then the average $u_N$ must be pulled down, hence\vspace*{-.1cm}
\[
    \limsup_{N\to\infty}u_N
    \le a_\gamma
    :=\frac{2+\gamma+\sqrt{\gamma^2+4\gamma}}2.\vspace*{-.1cm}
\]
Since the plateau bound also implies $u_N\ge1/\gamma$ for all sufficiently large $N$, we obtain a contradiction since $a_\gamma<1/\gamma$ for every $\gamma<1/2$. A tradeoff holds between the $\limsup$ and $\liminf$ of $N \G_N(\A)$:
\begin{corollary}[Limsup--liminf tradeoff]\label{cor:dichotomy}
For any nonadaptive horizon-independent linear-span method $\A$, if $\gamma:=\limsup_{N\to\infty}N\G_N(\A)<\infty$, then $\liminf_{N\to\infty}N\G_N\ge1/a_\gamma>0$.
\end{corollary}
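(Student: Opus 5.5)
The plan is to reuse the averaging argument sketched for Theorem~\ref{thm:lower}, but with an arbitrary finite $\gamma$ in place of $\gamma<1/2$, and then feed the resulting upper bound on $B_N/N$ into the plateau bound of Lemma~\ref{lem:plateau}. Fix $\gamma=\limsup_N N\G_N(\A)<\infty$ and any $\gamma'>\gamma$. Then $N\G_N\le\gamma'$ for all $N\ge N_0$.

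\textbf{Step 1: cap the increments.} By Lemma~\ref{lem:plateau}, $\G_N\ge 1/(B_N+1/2)$, so $u_N:=(B_N+1/2)/N\ge 1/\gamma'>0$ for $N\ge N_0$. At any record time $N> N_0$ with $\Delta_N>0$, Lemma~\ref{lem:jump} gives
\[
(\Delta_N-1)^2\le \G_N\,(B_{N-1}+1/2)\le \frac{\gamma'}{N}(N-1)u_{N-1}\le \gamma' u_{N-1}.
\]
Hence $\Delta_N\le 1+\sqrt{\gamma' u_{N-1}}$. When $\Delta_N=0$ this inequality holds trivially.

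\textbf{Step 2: bound the average.} The recursion is
\[
N u_N=(N-1)u_{N-1}+\Delta_N\le (N-1)u_{N-1}+g(u_{N-1}),\qquad g(u):=1+\sqrt{\gamma' u}.
\]
So $u_N-u_{N-1}\le \bigl(g(u_{N-1})-u_{N-1}\bigr)/N$. Since $g$ is concave with $g(0)>0$, the map $u\mapsto g(u)-u$ is positive below the unique positive fixed point $a_{\gamma'}$ and negative above it.

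Fix $\varepsilon>0$. While $u_{N-1}\ge a_{\gamma'}+\varepsilon$, the drift is at most $-c_\varepsilon/N$ for some $c_\varepsilon>0$. Because $\sum 1/N=\infty$, the sequence $u_N$ must eventually enter $[0,a_{\gamma'}+\varepsilon]$. Once inside, each step changes $u$ by at most $O(1/N)$, since $g$ is bounded on bounded sets. Also, above $a_{\gamma'}$ the drift is negative. So $u_N$ can exceed $a_{\gamma'}+\varepsilon$ only by $O(1/N)$ before being pushed down. This yields $\limsup_N u_N\le a_{\gamma'}+\varepsilon$, and letting $\varepsilon\to0$ gives $\limsup_N u_N\le a_{\gamma'}$.

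\textbf{Step 3: conclude.} Lemma~\ref{lem:plateau} gives $N\G_N\ge N/(B_N+1/2)=1/u_N$, hence
\[
\liminf_N N\G_N\ge 1/\limsup_N u_N\ge 1/a_{\gamma'}.
\]
The fixed point $a_{\gamma'}=\bigl(2+\gamma'+\sqrt{\gamma'^2+4\gamma'}\bigr)/2$ depends continuously on $\gamma'$. Letting $\gamma'\downarrow\gamma$ gives $\liminf_N N\G_N\ge 1/a_\gamma$, which is positive because $a_\gamma<\infty$.

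\textbf{Main obstacle.} I expect Step 2 to be the delicate part. It needs a clean discrete comparison (a Lyapunov-type argument for the recursion with step size $1/N$) in which the "overshoot" above $a_{\gamma'}$ is rigorously shown to vanish. I would first handle it by showing that the set $\{u\le a_{\gamma'}+\varepsilon\}$ is eventually forward invariant up to an $O(1/N)$ excursion. This works because for $N$ large the bounded step cannot jump past the region where the drift is negative.
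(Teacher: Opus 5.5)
Your proof is correct and follows the paper's route step for step: fix $\gamma'>\gamma$, cap $\Delta_N\le 1+\sqrt{\gamma' u_{N-1}}$ via Lemma~\ref{lem:jump}, use the running-average recursion and the divergent harmonic series to drive $u_N$ below any level $a>a_{\gamma'}$, then conclude with Lemma~\ref{lem:plateau} and continuity of $a_\gamma$ (the paper packages the middle step as Lemma~\ref{lem:scalar-growth}). The overshoot you flag as the main obstacle never occurs: $u_N$ is a convex combination of $u_{N-1}\le a$ and $\Delta_N\le 1+\sqrt{\gamma' a}<a$, so the set $\{u\le a\}$ is exactly forward invariant, and this is how the paper closes that step.
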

Contrapositively, if convergence is faster than $N^{-1}$ on some horizons, i.e. if $\G_{N_j}=o(N_j^{-1})$ along some subsequence $N_j\to\infty$, then $\liminf_NN\G_N=0$ and therefore $\limsup_NN\G_N=+\infty$. In the other direction, the smallest possible $\gamma$ is $\frac12$ and leads to $a_{1/2}=2$, so attaining the optimal finite $\limsup$ $\frac12$ forces the full sequence to satisfy $N\G_N\to \frac12$. Appendix~\ref{app:barrierproof} proves both Theorem~\ref{thm:lower} and Corollary~\ref{cor:dichotomy} through the same scalar-growth argument. In addition, the constant in Theorem~\ref{thm:lower} is tight.

Moreover, attaining the sharp $\limsup$ constant forces the record process
itself to be asymptotically rigid: $B_N/N\to2$ and
$\frac1N\sum_{k=1}^N|\Delta_k-2|\to0$; see
Corollary~\ref{cor:rigidity}.

\paragraph{Tightness.}
The constant $\frac12$ is attained by the horizon-independent GD schedule of
\citet{rotaruglineurpatrinos2026},\vspace*{-.35cm}
\[
    h_{-1}=0,\qquad
    h_k=1+\frac{2}{1+\sqrt{9-4h_{k-1}}},
    \qquad
    x_{k+1}=x_k-\frac{h_k}{L}\nabla f(x_k).
    \tag{RGP}\label{eq:rgp}\vspace*{-.3cm}
\]
Their last-iterate bound gives $\G_N\le2-h_{N-1}$, while the recurrence implies $(2-h_{N-1})^{-1}=\frac12+\sum_{k=0}^{N-1}h_k$. For GD, $B_N=\sum_{k=0}^{N-1}h_k$, so Lemma~\ref{lem:plateau} gives the reverse inequality. Thus $\G_N=2-h_{N-1}$ exactly and $N\G_N\to1/2$, as Corollary~\ref{cor:dichotomy} requires; the calculations are in Appendix~\ref{app:sharpness}.

\paragraph{Quantifiers.}
In our setup, worst-case objectives may depend on horizon $N$, as in Conjecture~1 of \citet{diakonikolaswang2022}: indeed, while Theorem~\ref{thm:lower} forces arbitrarily large bad horizons, it does not rely on one fixed hard objective. For the method~\eqref{eq:rgp}, however, the objective $f(x)=Lx^2/2$ is worst-case at every horizon; see Appendix~\ref{app:sharpness}.

\section{What remains possible without a known horizon?}
\label{sec:possibilities}

Theorem~\ref{thm:lower} rules out uniform last-iterate acceleration, but not acceleration for a subset of selected horizons. This is indeed achievable: for example, consider concatenating OGM-G runs of successive lengths $2^j$ for $j=0,1,\ldots$, each initialized from $x_0$ (i.e. these runs are fully independent). For each horizon $N_j=2^{j+1}-1$ we have $\G_{N_j}\le\frac{4}{(2^j+1)^2}=\frac{16}{(N_j+3)^2}$, so that $N_j\G_{N_j}\to0$, disproving Conjecture~1 of \citet{diakonikolaswang2022}. Related ideas based on concatenating or composing stepsize schedules appear in \cite{zhangjiang2026,grimmercompose2025}. In the above, accelerated horizons are relatively sparse, but they can be made to have density one with a more sophisticated construction. Given a set of horizons $S\subset\mathbb N$, let $\underline d(S):=\liminf_{M\to\infty}|S\cap\{1,\ldots,M\}|/M$ denote its lower asymptotic density.

\begin{theorem}[Density-one accelerated last iterates]
\label{thm:frequency}
Let $\omega:\mathbb N\to[1,\infty)$ be any nondecreasing function tending to $\infty$. There exist a nonadaptive horizon-independent  linear-span method $\A$ and a set of horizons $S\subset\mathbb N$ with lower asymptotic density equal to one such that \vspace{-.2cm}
\[
    \G_N(\A)\le\frac{\omega(N)}{N^2},
    \qquad \text{ for all } N\in S.
\]
\end{theorem}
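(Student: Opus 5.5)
The plan is to build $\A$ by \emph{interleaving} independent OGM-G runs of geometrically increasing length with \emph{copy steps}. A copy step places the iterate exactly at the endpoint of the most recently completed OGM-G run. Two features of \eqref{eq:span} make this legal. First, a linear-span method may put zero weight on any subset of past gradients, so an OGM-G run restarted from $x_0$, which only uses its own gradients, is an instance of \eqref{eq:span}. Second, any earlier iterate $x_j$ can be reproduced at a later time $k$ by setting $\beta_{i,k}:=\beta_{i,j}$ for $i<j$ and $\beta_{i,k}:=0$ otherwise. At a copy step the last iterate therefore equals the endpoint of an OGM-G run of length $m$, so $\G_N\le 4/(m+1)^2$ by \citet{kimfessler2021}. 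All coefficients are fixed in advance and independent of $N$, so $\A$ is nonadaptive and horizon-independent. The set $S$ will consist of all copy steps (after a finite burn-in).

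The schedule is organised in phases. Set $m_j:=2^j$ and choose integers $r_j\ge 2$, nondecreasing with $r_j\to\infty$; how to pick them is the key issue and is addressed below. Phase $j$ has length $\ell_j:=r_j m_{j+1}$. It is split into $m_{j+1}$ consecutive blocks of $r_j$ steps. In each block, the first step advances the OGM-G run of length $m_{j+1}$ (started from $x_0$) by one iteration, and the remaining $r_j-1$ steps are copies of the endpoint of run $j$, which was completed during phase $j-1$. By the end of phase $j$, run $j+1$ is complete, and phase $j+1$ copies its endpoint.

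The first thing to check is density. Within phase $j$ the non-copy steps make up a fraction $1/r_j\to0$. The phase lengths grow geometrically, so any prefix $\{1,\dots,M\}$ ending inside phase $j$ contains at most $\sum_{i\le j} m_{i+1}\le 2m_{j+1}$ non-copy steps, plus a bounded burn-in. The prefix also has length at least a fixed fraction of the total length of phases up to $j-1$, which is of order $r_{j-1}m_j$. The proportion of non-copy steps is therefore $O(1/r_{j-1})\to0$, and this gives $\underline d(S)=1$.

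The second thing to check is the rate. A copy step $N$ in phase $j$ satisfies $N\le T_j:=\sum_{i\le j}r_i m_{i+1}\le 4r_j m_j$, so
\[
\G_N\le\frac{4}{(m_j+1)^2}\le\frac{64\,r_j^2}{N^2}.
\]
It thus suffices that $64\,r_j^2\le\omega(N)$ for every $N$ in phase $j$. Since $\omega$ is nondecreasing, it is enough to require $64\,r_j^2\le\omega(T_{j-1})$, where $T_{j-1}$ is the start of phase $j$.

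The main obstacle is the apparent circularity in this last requirement: $T_{j-1}$ depends on $r_0,\dots,r_{j-1}$, and $r_j$ is constrained by $\omega(T_{j-1})$. I would resolve it by a recursive choice. Set
\[
r_j:=\max\Bigl\{2,\ \bigl\lfloor \sqrt{\omega(T_{j-1})}/8\bigr\rfloor\Bigr\},
\]
which is well defined because $T_{j-1}$ is already determined, and which is nondecreasing and tends to $\infty$ since $T_{j-1}\to\infty$ and $\omega\to\infty$. While $\sqrt{\omega(T_{j-1})}/8<2$, the bound $64r_j^2\le\omega$ may fail, but this happens only for finitely many phases, because $\omega\to\infty$. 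Those finitely many phases form the burn-in and are excluded from $S$, which does not affect lower density. One remaining detail to verify is that $r_j\to\infty$ is indeed all the density argument needs, even though $r_j$ may grow arbitrarily slowly.
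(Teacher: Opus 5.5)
Your construction is correct and uses the same mechanism as the paper's proof, which applies Theorem~\ref{thm:block-transfer}(iii) to OGM-G: fresh OGM-G runs from $x_0$ embedded with zero weights on unrelated gradients, endpoints reproduced by copying coefficient rows, and preparation steps that make up a vanishing fraction of horizons. The paper schedules this more simply. It uses fixed dyadic epochs $\{2^k,\dots,2^{k+1}-1\}$, each beginning with a contiguous run of length $m_k=\lceil 2^{k+2}/\sqrt{\omega(2^k)}\rceil$ and then copying that run's own endpoint; because it shrinks the run inside a fixed epoch rather than stretching the phase around a fixed dyadic run, the controlling value $\omega(2^k)$ is known in advance, so neither your recursive choice of $r_j$ nor the interleaving is needed.
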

The construction uses dyadic epochs and works by padding iterates with redundant copies (exploiting, in some sense, a weakness in the formulation of the conjecture). Choose $k_0$ such that $\omega(2^k)\ge16$ for every $k\ge k_0$. In each epoch $k\ge k_0$, run a fresh $m_k=\left\lceil {2^{k+2}}/ \sqrt{\omega(2^k)}\right\rceil$-step OGM-G scheme from $x_0$, then reproduce its endpoint for the rest of the epoch (and omit earlier epochs from $S$). The fraction of steps during which OGM-G is applied then vanishes asymptotically, while the remaining iterates, whose positions are fixed in advance, satisfy the stated bound; see Appendix~\ref{app:blocks}.

For example, choosing $\omega(N)=1+\log N$ gives $\G_N=o(N^{-1})$ on a density-one set, so Corollary~\ref{cor:dichotomy} forces $\limsup_{N\to\infty}N\G_N=+\infty$. For the explicit block construction, this divergence is witnessed at every restart: when $N=2^k$, one has $\G_N\ge\tfrac12$, and consequently $N\G_N\ge 2^{k-1}\to\infty$; see Appendix~\ref{app:blocks}.

The record process makes the mechanism explicit. After the first endpoint in each epoch, reproducing it repeats an earlier coefficient row, so $B_N$ stalls and $\Delta_N=0$. Thus the threshold-crossing trap is silent at almost every good horizon, while the rarer OGM-G and new-record horizons carry the limsup. In contrast, positive-stepsize GD cannot stall its record, since $\Delta_N=h_{N-1}>0$ at every step, explaining why a pointwise lower bound persists in the analysis of \citet{tsai2026}.

\paragraph{Uniform best-so-far acceleration.} The distinction between current and best-so-far iterates, already emphasized by \citet{kornowskishamir2024} for function values, also matters in our squared gradient norm setup. Indeed, consider a method $\A$ that, instead of returning the last iterate after $N$ iterations, returns some previously generated iterate, namely the iterate located at position $p(N)$, where the function $p$ is known in advance. In this setup, an idea similar to the one above gives uniform acceleration. Let $\G_{p(N)}$ be the bound for that output, and let $\G_N^{\mathrm{best}}$ be the bound for the best observed gradient (obtained by replacing $\|\nabla f(x_N)\|^2$ in \eqref{eq:metric} by $\min_{0\le k\le N}\|\nabla f(x_k)\|^2$).

\begin{proposition}[Uniform best-so-far acceleration]
\label{prop:best}
There exists a nonadaptive horizon-independent linear-span method $\A$ with an output selector function 
\(p:\mathbb{N}\to\mathbb{N}_0\), with \(p(N)\le N\) for every \(N\), such that \vspace*{-.2cm}
\[
    \G_N^{\mathrm{best}}(\A)\le \G_{p(N)}(\A) \le \frac{64}{(N+1)^2}, \qquad \text{for every } N\ge1.\vspace*{-.15cm}
\]
\end{proposition}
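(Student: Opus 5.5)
We build the method from independent OGM-G blocks of doubling lengths, all restarted from $x_0$. For $j=0,1,2,\dots$, block $j$ runs a fresh OGM-G with horizon $n_j:=2^j$. It starts again from $x_0$, so its first point is the linear-span combination with all-zero coefficients. Each iterate of block $j$ is a linear combination of $x_0$ and of gradients evaluated at iterates of that block, and those iterates are themselves iterates of the global sequence. The whole concatenation is therefore of the form \eqref{eq:span}, with coefficients fixed in advance and independent of $N$; it is nonadaptive and horizon-independent. Block $j$ occupies global indices $T_j,\dots,T_j+n_j$, where $T_0=0$ and $T_{j+1}=T_j+n_j+1$. This gives $T_j=2^j+j-1$. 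Write $E_j:=T_j+n_j$ for the position of the endpoint of block $j$. By the OGM-G guarantee of \citet{kimfessler2021} applied to that block alone,
\[
\G_{E_j}(\A)\le\frac{4}{(n_j+1)^2}.
\]

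For the output selector, given $N\ge1$ we set $p(N):=E_{j(N)}$, where $j(N)$ is the largest $j$ with $E_j\le N$. If no such $j$ exists, which happens only for very small $N$, we set $p(N):=0$. The first inequality in Proposition~\ref{prop:best}, $\G_N^{\mathrm{best}}\le\G_{p(N)}$, is immediate: $p(N)\le N$, so the minimum over $k\le N$ of $\|\nabla f(x_k)\|^2$ is at most $\|\nabla f(x_{p(N)})\|^2$, and we take suprema.

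It remains to show $4/(n_{j}+1)^2\le 64/(N+1)^2$ with $j=j(N)$, i.e.\ $N+1\le 4(2^j+1)$. By maximality of $j$ we have $N< E_{j+1}=2^{j+2}+j$. Hence
\[
N+1\le 2^{j+2}+j\le 4\cdot2^{j}+4=4(2^j+1),
\]
where the last step uses $j\le 4$, which is false for large $j$. This is the main obstacle: the additive index offset $j$ in $T_j$ (one extra index per restart at $x_0$) breaks the clean factor-$4$ bound. There are two ways to fix it. The first is to let the restart point coincide with the previous block's index bookkeeping, so that blocks have length exactly $2^j$ with $E_j=2^{j+1}-1$; then $N<E_{j+1}=2^{j+2}-1$ gives $N+1\le 2^{j+2}\le4(2^j+1)$. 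The second is to keep the offset and use a slightly smaller ratio than 2 to absorb the $j$ term. I would go with the first: treat the restart copy of $x_0$ as the last iterate of the previous block, or simply note that $x_0$ needs no new index at all, since it is available as the zero combination. Then block $j$'s $n_j=2^j$ steps occupy indices $2^j,\dots,2^{j+1}-1$, giving the constant $64$ as claimed.

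Finally, for small $N$ (those with $j(N)$ undefined or $j=0$), the trivial bound $\G\le 2$ from $\|\nabla f(x)\|^2\le 2L(f(x)-f_\star)$ and smoothness covers them. We check $2\le64/(N+1)^2$ for $N\le4$, which holds for those few $N$.
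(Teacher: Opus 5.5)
Your corrected construction is exactly the paper's argument: OGM-G blocks of lengths $1,2,4,\ldots$ embedded consecutively, with no index spent on the restart point $x_0$, so block $j$ ends at $2^{j+1}-1$. You output an already completed endpoint and use $N+1\le 2^{j+2}\le 4(2^j+1)$; the paper uses the endpoint at $2^{\lfloor\log_2 N\rfloor}-1$, which differs from yours only when $N$ is itself an endpoint and gives the same constant. In a write-up, drop the first indexing and the small-$N$ paragraph: since $E_0=1$, $j(N)$ is always defined and the main inequality already covers $j=0$ via the OGM-G bound $\G_{E_0}\le 1$. That paragraph's justification is also wrong: $\|\nabla f(x)\|^2\le 2L(f(x)-f_\star)$ yields $\G\le 2$ only at $x_0$, or at iterates known not to increase $f$, not at a general iterate.
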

Both Theorem~\ref{thm:frequency} and Proposition~\ref{prop:best} follow from the same finite-to-anytime transfer principle outlined in Appendix~\ref{app:blocks}. For best-so-far, epochs with geometrically increasing lengths ensure that an accelerated endpoint is available at every horizon $N$; the choice of which endpoint to return depends only on $N$, not on the observed gradients.\footnote{The constant $64$ is not optimized and can be improved using non-dyadic chained blocks.}

\paragraph{Takeaway.}
When the number of iterations is known in advance, horizon-dependent methods such as OGM-G can accelerate convergence of the final gradient norm. With an unknown horizon and mandatory last-iterate output, we show that uniform anytime acceleration is impossible: we have that $\inf_\A \limsup_{N\to\infty} N \G_N(\A)=\frac12$, attained by the schedule in \eqref{eq:rgp}. Nevertheless, horizon independence allows an $o(N^{-1})$ rate at almost every horizon, but then $N\G_N$ must become arbitrarily large along the exceptional horizons. 

The requirement to output the last iterate is partially responsible for these results. Such a requirement is natural in a system that cannot retain
or return previous iterates or may be interrupted unexpectedly. Methods
that can return a previous iterate can achieve an $O(N^{-2})$ bound.

\clearpage
\bibliography{references}

\paragraph{Diclaimer} Generative-AI tools were used to assist with mathematical ideas, literature searches, language editing and critical review of the manuscript. The authors take full responsibility for the content.

\appendix

\section{Proofs of the two Huber bounds}
\label{app:huberproofs}

\begin{proof}[Proof of Lemma~\ref{lem:plateau}]
Fix $N$ and choose $\delta=(B_N+1)^{-1}$. We verify inductively that all iterates up to $N$ remain on the plateau. Since $x_0=1\ge\delta$, the claim holds initially. Now suppose that $\phi_\delta'(x_i)=\delta$ for every $i<k$. Then  $x_k=1-\delta s_k$.

Since $s_k\le B_N$,
\[
    x_k\ge1-\delta B_N
    =1-\frac{B_N}{B_N+1}
    =\frac1{B_N+1}
    =\delta.
\]
Hence $x_k$ is on the plateau and $\phi_\delta'(x_k)=\delta$, completing the induction. In particular,
\[
    \bigl|\phi_\delta'(x_N)\bigr|^2=\delta^2.
\]
Moreover, $\phi_\delta(1)-\phi_\delta(0)=\delta-\delta^2/2$. For $f=L\phi_\delta$, the factors $L^2$ cancel in \eqref{eq:metric}, and therefore
\[
    \G_N\ge\frac{\delta^2}{\delta-\delta^2/2}
    =\frac1{B_N+1/2}.
\]
\end{proof}

\begin{proof}[Proof of Lemma~\ref{lem:jump}]
Fix $N$ with $\Delta_N>0$ and choose $\delta=(B_{N-1}+1)^{-1}$. The same induction as above, using $s_k\le B_{N-1}$ for $k<N$, shows that $x_k\ge\delta$ and hence $\phi_\delta'(x_k)=\delta$ for every $k<N$.

Since $N$ sets a new record, $s_N=B_N=B_{N-1}+\Delta_N$, and therefore
\[
    x_N=1-\delta s_N=1-\frac{B_{N-1}+\Delta_N}{B_{N-1}+1}=\frac{1-\Delta_N}{B_{N-1}+1}<\delta.
\]

Thus $x_N$ lies on the quadratic branch of $\phi_\delta$, including when $x_N<0$, and $\phi_\delta'(x_N)=x_N$. Since
\[
    |\phi_\delta'(x_N)|^2=\frac{(\Delta_N-1)^2}{(B_{N-1}+1)^2},
    \qquad
    \phi_\delta(1)-\phi_\delta(0)=\frac{B_{N-1}+1/2}{(B_{N-1}+1)^2},
\]
the normalized ratio gives
\[
    \G_N\ge\frac{(\Delta_N-1)^2}{B_{N-1}+1/2}.
\]
\end{proof}

\section{Examples of regular record growth}
\label{app:regular}

We verify the two examples quoted after Proposition~\ref{prop:regular}.

\paragraph{Gradient descent with convergent stepsizes.}
For GD with positive normalized stepsizes satisfying $h_k\to h>0$, one has $B_N=\sum_{k=0}^{N-1}h_k$ and $\Delta_N=h_{N-1}$. Hence Ces\`aro convergence gives $B_N/N\to h$, while $\Delta_N\to h$, so the record process has regular growth with $a=h$. This includes constant-step GD and the schedule \eqref{eq:rgp}; for the latter, Appendix~\ref{app:sharpness} proves $h_k\to2$, so $a=2$. Together with the exact limit $N\G_N\to1/2$ proved in Section~\ref{sec:barrier}, this shows that Proposition~\ref{prop:regular} is sharp.

\paragraph{Heavy Ball.}
Consider fixed-coefficient Heavy Ball
\[
    x_{k+1}=(1+\beta)x_k-\beta x_{k-1}
    -\frac{\alpha}{L}\nabla f(x_k),
    \qquad \alpha>0,\quad |\beta|<1.
\]
After any fixed linear-span initialization, the coefficients $s_k$ satisfy $s_{k+1}=(1+\beta)s_k-\beta s_{k-1}+\alpha$. Therefore the increments $q_k:=s_k-s_{k-1}$ satisfy $q_{k+1}=\beta q_k+\alpha$, and hence
\[
    q_k\longrightarrow\frac{\alpha}{1-\beta}>0.
\]
Thus $s_k$ is eventually increasing and $s_k/k\to\alpha/(1-\beta)$. Consequently $B_N=s_N$ eventually and
\[
    \frac{B_N}{N}\to\frac{\alpha}{1-\beta},
    \qquad
    \Delta_N\to\frac{\alpha}{1-\beta},
\]
so Heavy Ball has regular record growth.

\paragraph{A weaker sufficient condition.}
Lemma~\ref{lem:plateau} alone gives $\liminf_NN\G_N\ge1/2$ whenever $\limsup_NB_N/N\le2$, without any assumption on $\Delta_N$. Thus regular growth is needed in Proposition~\ref{prop:regular} only to control average record growth above the critical growth parameter $a=2$.

\paragraph{Regularity versus a small limsup.}
Proposition~\ref{prop:regular} and Corollary~\ref{cor:dichotomy} are incomparable; neither implies the other. For a method with the optimal limsup $\gamma=1/2$, the corollary already gives $\liminf_NN\G_N\ge1/2$ with no assumption on the record process. For a regular method with a large limsup, the proposition still gives $1/2$, whereas the corollary gives only $1/a_\gamma<1/2$. Regularity and a small limsup are two independent ways of excluding the record stalling of Section~\ref{sec:possibilities}.

\paragraph{Restarted methods.}
Regular growth need not be preserved by restarting. For instance, if a fixed-coefficient momentum method is restarted periodically, its displacement increments on a constant-gradient plateau generally repeat the same finite cycle rather than converge to a single value. Then $B_N/N$ may converge while $\Delta_N$ remains periodic, so Proposition~\ref{prop:regular} does not apply in general. Nonadaptive restart schedules are nevertheless horizon-independent and nonadaptive, and hence remain within the unrestricted class of Theorem~\ref{thm:lower}; characterizing their pointwise behavior remains open.

\section{Proof of the tight last-iterate lower bound}
\label{app:barrierproof}

\begin{lemma}[Scalar growth bound]\label{lem:scalar-growth}
Let $(B_k)$ be a nonnegative nondecreasing sequence and set $\Delta_k:=B_k-B_{k-1}$. Suppose that, for some $\gamma>0$ and all sufficiently large $k$,
\[
    \Delta_k\le1+\sqrt{\frac{\gamma(B_{k-1}+1/2)}{k}}.
\]
Then
\[
    \limsup_{k\to\infty}\frac{B_k+1/2}{k}
    \le
    a_\gamma:=\frac{2+\gamma+\sqrt{\gamma^2+4\gamma}}{2}.
\]
\end{lemma}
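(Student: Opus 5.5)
We work with the normalized averages $u_k:=(B_k+1/2)/k$. Subtracting $B_{k-1}$ from $B_k$ and using the hypothesis gives, for all large $k$,
\[
    k\,u_k = (k-1)u_{k-1}+\Delta_k \le (k-1)u_{k-1} + 1+\sqrt{\gamma\,u_{k-1}\tfrac{k-1}{k}} \le (k-1)u_{k-1}+g(u_{k-1}),
\]
where $g(u):=1+\sqrt{\gamma u}$. The term $\frac{k-1}{k}\le1$ is simply dropped because $g$ is nondecreasing. Rewriting this as
\[
    u_k \le u_{k-1}+\frac{g(u_{k-1})-u_{k-1}}{k}
\]
exhibits $u_k$ as a stochastic-approximation-type recursion with step $1/k$ and drift $g(u)-u$. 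This drift is strictly concave in $\sqrt u$: writing $t=\sqrt u$, it equals $1+\sqrt\gamma\,t-t^2$, whose unique positive root is $t=\sqrt{a_\gamma}$, with $a_\gamma$ solving $u=1+\sqrt{\gamma u}$. Squaring $u-1=\sqrt{\gamma u}$ confirms $a_\gamma=\frac{2+\gamma+\sqrt{\gamma^2+4\gamma}}{2}$. The drift is negative for $u>a_\gamma$ and positive below.

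Fix $\varepsilon>0$ and set $c:=a_\gamma+\varepsilon$. The goal is to show $u_k\le c+o(1)$. There are two cases.

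\textbf{Case $u_{k-1}\ge c$.} Since $g(u)-u\le -\eta$ on $[c,\infty)$ for some $\eta=\eta(\varepsilon)>0$ (by monotonicity of the concave-in-$t$ drift past its root), we get $u_k\le u_{k-1}-\eta/k$.

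\textbf{Case $u_{k-1}<c$.} Here
\[
    u_k\le \frac{(k-1)c+g(c)}{k}\le c+\frac{g(c)-c}{k}\le c,
\]
using monotonicity of $(k-1)u+g(u)$ in $u$.

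So once $u$ drops below $c$ it stays below $c$. While it stays above $c$, it decreases by at least $\eta/k$ per step, and since $\sum 1/k=\infty$ it cannot remain above $c$ forever, because $u_k\ge0$. Hence $u_k\le c$ eventually, giving $\limsup u_k\le a_\gamma+\varepsilon$ for every $\varepsilon$, which proves the claim.

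The main obstacle is the bookkeeping in the second case: showing that the map $u\mapsto\frac{(k-1)u+g(u)}{k}$ is nondecreasing, so that a bound at $c$ propagates. This holds since both $u$ and $g(u)$ are nondecreasing. A secondary point is that the hypothesis only holds for $k\ge k_0$, which is harmless because $u_{k_0}$ is finite. One must also verify $g(c)\le c$ for $c\ge a_\gamma$. This holds because $g(u)-u$, viewed in $t=\sqrt u$, is a downward parabola with positive root $\sqrt{a_\gamma}$, so it is nonpositive for all $u\ge a_\gamma$ and bounded by $-\eta<0$ on $[c,\infty)$.
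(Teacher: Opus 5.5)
Your proof is correct and follows essentially the same route as the paper: the running average $u_k=(B_k+1/2)/k$ with the recursion $u_k=u_{k-1}+(\Delta_k-u_{k-1})/k$, a uniform decrement $\eta/k$ above any level $c>a_\gamma$, which the divergent harmonic series rules out persisting, and invariance of $\{u\le c\}$ once entered. The only cosmetic difference is that the paper phrases the invariance step as a convex combination of $u_k\le c$ and $\Delta_{k+1}\le 1+\sqrt{\gamma c}<c$ rather than through monotonicity of $u\mapsto (k-1)u+g(u)$; the two are equivalent.
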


\begin{proof}
Define
\[
    u_k:=\frac{B_k+1/2}{k}.
\]
Since $B_k=B_{k-1}+\Delta_k$ and $B_{k-1}+1/2=(k-1)u_{k-1}$,
\begin{equation}
    u_k
    =\frac{(k-1)u_{k-1}+\Delta_k}{k}
    =u_{k-1}+\frac{\Delta_k-u_{k-1}}{k}.
    \label{eq:average-slope}
\end{equation}
Thus $u_k$ is a running average of the record increments: if the new increment $\Delta_k$ is smaller than the current average $u_{k-1}$, then $u_k$ decreases. The idea is that above the fixed point $a_\gamma$, the assumed increment bound forces exactly this situation.

Choose $K\ge2$ such that the assumed increment bound holds for every $k\ge K$. For $k\ge K$, using $B_{k-1}+1/2=(k-1)u_{k-1}$, we obtain
\[
    \Delta_k
    \le1+\sqrt{\gamma\frac{k-1}{k}u_{k-1}}
    \le1+\sqrt{\gamma u_{k-1}}.
\]
Let $a_\gamma$ denote the positive solution of $a_\gamma=1+\sqrt{\gamma a_\gamma}$, namely 
\[
    a_\gamma:=\frac{2+\gamma+\sqrt{\gamma^2+4\gamma}}2.
\]
Fix $a>a_\gamma$ and define $q(u):=u-1-\sqrt{\gamma u}$. Since
\[
    q'(u)=1-\frac{\sqrt\gamma}{2\sqrt u}>0
    \qquad (u\ge a_\gamma),
\]
the function $q$ is increasing on $[a_\gamma,\infty)$. Hence, with $\eta:=q(a)>0$, whenever $u_{k-1}\ge a$,
\[
    \Delta_k
    \le1+\sqrt{\gamma u_{k-1}}
    \le u_{k-1}-\eta.
\]
Substituting into \eqref{eq:average-slope} gives
\[
    u_k\le u_{k-1}-\frac{\eta}{k}.
\]
If $u_k\ge a$ held for every $k\ge K$, summing this over $k$ would send $u_k\to-\infty$, because the harmonic series diverges. So $u_{k_0}<a$ for some $k_0\ge K$.

It remains to show that, once below $a$, the sequence cannot cross back above it. For $k\ge K$, if $u_k\le a$, then
\[
    \Delta_{k+1}
    \le1+\sqrt{\gamma\frac{k}{k+1}u_k}
    \le1+\sqrt{\gamma a}
    <a.
\]
Equation~\eqref{eq:average-slope} expresses $u_{k+1}$ as a convex combination of $u_k\le a$ and $\Delta_{k+1}<a$, so $u_{k+1}<a$. Thus $u_k\le a$ eventually.

Since this holds for every $a>a_\gamma$, letting $a\downarrow a_\gamma$ yields
\[
    \limsup_{k\to\infty}\frac{B_k+1/2}{k}
    \le a_\gamma.
\]
\end{proof}

\begin{proof}[Proof of Theorem~\ref{thm:lower}]
Suppose the conclusion were false. Then there exists $0<\gamma<1/2$ such that $\G_N\le\frac{\gamma}{N}$ for all sufficiently large $N$. Combining this with the plateau bound of Lemma~\ref{lem:plateau} gives
\[
    \frac1{B_N+1/2}\le\G_N\le\frac{\gamma}{N}, 
    \qquad \text{hence} \qquad
    \liminf_{N\to\infty}\frac{B_N+1/2}{N}\ge\frac1\gamma.
\]
Thus keeping the gradient small would require the record displacement to grow asymptotically at least as fast as $N/\gamma$.

On the other hand, the threshold-crossing bound of Lemma~\ref{lem:jump} gives, whenever $\Delta_N>0$,
\[
    \frac{(\Delta_N-1)^2}{B_{N-1}+1/2}
    \le\G_N
    \le\frac{\gamma}{N}, 
    \qquad \text{hence} \qquad
    \Delta_N
    \le
    1+\sqrt{\frac{\gamma(B_{N-1}+1/2)}{N}}.
\]
The same inequality holds trivially when $\Delta_N=0$. Lemma~\ref{lem:scalar-growth} therefore yields
\[
    \limsup_{N\to\infty}\frac{B_N+1/2}{N}
    \le
    a_\gamma
    :=
    \frac{2+\gamma+\sqrt{\gamma^2+4\gamma}}2.
\]
For $\gamma<1/2$, one has $a_\gamma<1/\gamma$. Indeed, with $q(u):=u-1-\sqrt{\gamma u}$, the identity defining $a_\gamma$ gives $q(a_\gamma)=0$, whereas
\[
    q(1/\gamma)=\frac1\gamma-2>0.
\]
Since $q$ is increasing to the right of $a_\gamma$, it follows that $a_\gamma<1/\gamma$, contradicting the preceding lower bound. Hence
\[
    \limsup_{N\to\infty}N\G_N\ge\frac12.
\]
Since the two Huber instances used in Lemmas~\ref{lem:plateau} and~\ref{lem:jump} are one-dimensional, the lower bound already holds in dimension one.
\end{proof}

\begin{proof}[Proof of Corollary~\ref{cor:dichotomy}]
Let $\gamma:=\limsup_NN\G_N<\infty$ and fix $\gamma'>\gamma$. Then $\G_N\le\gamma'/N$ for all sufficiently large $N$. As in the preceding proof,
\[
    \Delta_N\le1+\sqrt{\frac{\gamma'(B_{N-1}+1/2)}{N}}
\]
for all sufficiently large $N$, with the inequality trivial when $\Delta_N=0$. Lemma~\ref{lem:scalar-growth} therefore gives
\[
    \limsup_{N\to\infty}\frac{B_N+1/2}{N}\le a_{\gamma'}.
\]
Using Lemma~\ref{lem:plateau},
\[
    \liminf_{N\to\infty}N\G_N
    \ge\liminf_{N\to\infty}\frac{N}{B_N+1/2}
    \ge\frac1{a_{\gamma'}}.
\]
Because $\gamma^2+4\gamma=(2+\gamma)^2-4$, the constant may also be written
\[
    a_\gamma=\frac{(2+\gamma)+\sqrt{(2+\gamma)^2-4}}2,
\]
which is continuous and increasing in $\gamma$ on $(0,\infty)$. Letting $\gamma'\downarrow\gamma$ therefore proves the claim.
\end{proof}

The two limiting cases stated after Corollary~\ref{cor:dichotomy} follow. Since $1/a_\gamma>0$ for every finite $\gamma$, a vanishing liminf forces $\limsup_NN\G_N=+\infty$. And since $a_\gamma$ increases with $\gamma$ and $a_{1/2}=2$, a limsup $\gamma<1/2$ would give $\liminf_NN\G_N\ge1/a_\gamma>1/2>\gamma$, which is impossible, so the corollary recovers Theorem~\ref{thm:lower}; at $\gamma=1/2$ it returns $1/2=\gamma$, forcing $N\G_N\to1/2$.

\begin{remark}[What is not claimed]
\label{rem:notsharp}
The bound $1/a_\gamma$ is known to be attained at $\gamma=1/2$ by \eqref{eq:rgp}. For $\gamma>1/2$, it is merely the bound delivered by the two Huber traps, and we do not know whether it is sharp: we know of no method with $\limsup_NN\G_N=\gamma$ and $\liminf_NN\G_N=1/a_\gamma$. Determining the sharp tradeoff curve would be a separate result.
\end{remark}

\section{Tightness and record-growth rigidity}
\label{app:sharpness}

\subsection{Tightness of the RGP schedule}

For the positive-stepsize GD schedule of Section~\ref{sec:barrier}, the scalar record process satisfies
\[
    B_N=\sum_{k=0}^{N-1}h_k.
\]
Set $e_k:=2-h_k$. The recurrence, starting from $h_0=3/2$, gives $1<h_k<2$ for every $k\ge0$. Since $h_{-1}=0$, we have $e_{-1}=2$. The recurrence for $h_k$ gives
\[
    e_k
    =1-\frac{2}{1+\sqrt{1+4e_{k-1}}}
    =\frac{\sqrt{1+4e_{k-1}}-1}{\sqrt{1+4e_{k-1}}+1},
\]
which is equivalent to $e_{k-1}=e_k/(1-e_k)^2$. Consequently,
\[
    \frac1{e_k}-\frac1{e_{k-1}}
    =\frac{1-(1-e_k)^2}{e_k}
    =2-e_k
    =h_k.
\]
Summing from $k=0$ to $N-1$ and using $e_{-1}=2$ yields
\begin{equation}
    \frac1{2-h_{N-1}}
    =\frac12+\sum_{k=0}^{N-1}h_k.
    \label{eq:rgp-telescope}
\end{equation}
Since $h_k>1$, the right-hand side diverges, so $2-h_{N-1}\to0$ and hence $h_k\to2$. Therefore
\[
    \frac1N\left(\frac12+\sum_{k=0}^{N-1}h_k\right)\to2,
    \qquad
    N(2-h_{N-1})\to\frac12.
\]

Finally, Lemma~\ref{lem:plateau} and $B_N=\sum_{k=0}^{N-1}h_k$ give
\[
    \G_N\ge\frac1{B_N+1/2}=2-h_{N-1}.
\]
Together with the upper bound $\G_N\le2-h_{N-1}$ from \citet[Corollary~2.19]{rotaruglineurpatrinos2026}, this proves
\[
    \G_N=2-h_{N-1}
\]
for every $N\ge1$. In particular, the schedule attains the tight asymptotic constant $1/2$.

Moreover, one fixed quadratic witnesses the exact value at every horizon. For $f(x)=Lx^2/2$,
\[
    x_{k+1}=(1-h_k)x_k=-(1-e_k)x_k.
\]
Hence
\[
    \frac{\|\nabla f(x_N)\|^2}{L(f(x_0)-f_\star)}
    =2\frac{x_N^2}{x_0^2}
    =2\prod_{k=0}^{N-1}(1-e_k)^2
    =2\prod_{k=0}^{N-1}\frac{e_k}{e_{k-1}}
    =e_{N-1}
    =2-h_{N-1}.
\]
Thus the same quadratic is worst-case for every $N$.

\subsection{Rigidity at the tight constant}

Corollary~\ref{cor:dichotomy} pins the rate of an optimal method; the record process is pinned as well.

\begin{corollary}[Record-growth rigidity]\label{cor:rigidity}
If a nonadaptive linear-span method \eqref{eq:span} generated by a single infinite coefficient table $(\beta_{i,k})_{i<k}$ satisfies
\[
    \limsup_{N\to\infty}N\G_N=\frac12,
\]
then
\[
    \frac{B_N}{N}\to2,
    \qquad
    \frac1N\sum_{k=1}^N|\Delta_k-2|\to0.
\]
\end{corollary}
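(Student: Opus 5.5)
Given $\limsup_N N\G_N=\tfrac12$, the plan is to read off a two-sided pin on $u_N:=(B_N+1/2)/N$ from results already proved, and then turn this average pin into an $L^1$-Ces\`aro pin on the increments using the one-sided cap from Lemma~\ref{lem:jump}.

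\emph{Step 1 ($B_N/N\to2$).} Apply Corollary~\ref{cor:dichotomy} with $\gamma=1/2$, so $\liminf_N N\G_N\ge 1/a_{1/2}=1/2$, whence $N\G_N\to1/2$. For the upper side of $u_N$, I would reuse the proof of that corollary. Fix $\gamma'>1/2$. For large $N$ we have $\G_N\le\gamma'/N$, so Lemma~\ref{lem:jump} caps the increments as required by Lemma~\ref{lem:scalar-growth}, and that lemma gives $\limsup u_N\le a_{\gamma'}$. Letting $\gamma'\downarrow1/2$, continuity gives $\limsup u_N\le a_{1/2}=2$. For the lower side, Lemma~\ref{lem:plateau} gives $u_N\ge 1/(N\G_N)\to2$. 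Hence $u_N\to2$, and so $B_N/N\to2$.

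\emph{Step 2 (increment cap).} For each $\varepsilon>0$, the same argument yields, for all large $k$,
\[
\Delta_k\le 1+\sqrt{\gamma' u_{k-1}}\le 2+\varepsilon,
\]
since $\gamma'u_{k-1}\to \gamma'\cdot2$, which is close to $1$. For $\Delta_k=0$ the bound is trivial. So $(\Delta_k-2)^+\le\varepsilon$ eventually, and therefore $\frac1N\sum_{k\le N}(\Delta_k-2)^+\to0$.

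\emph{Step 3 (Ces\`aro pin).} Use the identity $|x|=2x^+-x$ with $x=\Delta_k-2$. Telescoping gives $\sum_{k\le N}(\Delta_k-2)=B_N-2N$, since $B_0=0$. Therefore
\[
\frac1N\sum_{k=1}^N|\Delta_k-2|=\frac2N\sum_{k=1}^N(\Delta_k-2)^+-\Bigl(\frac{B_N}{N}-2\Bigr).
\]
By Steps 1 and 2, both terms tend to $0$.

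The main obstacle is Step 1's upper bound. One must check that Lemma~\ref{lem:scalar-growth} applies for every $\gamma'>1/2$ and that $a_{\gamma'}\to2$. The monotonicity and continuity of $a_\gamma$ are already noted in the proof of Corollary~\ref{cor:dichotomy}, so this should go through. The rest is elementary.
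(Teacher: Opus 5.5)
Your proposal is correct and follows essentially the same route as the paper: you pin $(B_N+1/2)/N$ between the plateau bound and Lemma~\ref{lem:scalar-growth} with $\gamma'\downarrow 1/2$, obtain $\limsup_N\Delta_N\le 2$ from the threshold-crossing cap, and conclude via $|x-2|=2(x-2)_++2-x$ together with $\sum_{k\le N}\Delta_k=B_N$. The one difference is that you first invoke Corollary~\ref{cor:dichotomy} to get $N\G_N\to1/2$; this is harmless but unnecessary, since the plateau bound alone already gives $\liminf_N (B_N+1/2)/N\ge 1/\limsup_N N\G_N=2$.
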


\begin{proof}
Fix $\gamma>1/2$. Since $\limsup_NN\G_N=1/2$, we have $\G_N\le\gamma/N$ for all sufficiently large $N$. Lemma~\ref{lem:plateau} gives
\[
    \liminf_{N\to\infty}\frac{B_N+1/2}{N}\ge\frac1\gamma,
\]
while Lemma~\ref{lem:scalar-growth} gives
\[
    \limsup_{N\to\infty}\frac{B_N+1/2}{N}
    \le
    \frac{2+\gamma+\sqrt{\gamma^2+4\gamma}}2.
\]
Letting $\gamma\downarrow1/2$, both bounds converge to $2$, so $B_N/N\to2$.

The threshold-crossing bound also gives, for all sufficiently large $N$,
\[
    \Delta_N\le1+\sqrt{\frac{\gamma(B_{N-1}+1/2)}{N}},
\]
with the inequality trivial when $\Delta_N=0$. Since $B_N/N\to2$, letting $\gamma\downarrow1/2$ yields
\[
    \limsup_{N\to\infty}\Delta_N\le2.
\]
On the other hand,
\[
    \frac1N\sum_{k=1}^N\Delta_k
    =\frac{B_N}{N}
    \longrightarrow2.
\]
Thus the increments have an asymptotic average of $2$, while their excess above $2$ vanishes. Indeed, $(\Delta_N-2)_+\to0$, so its Ces\`aro mean also tends to zero. Using
\[
    |x-2|=2(x-2)_++2-x,
\]
we obtain
\[
    \frac1N\sum_{k=1}^N|\Delta_k-2|
    =
    \frac2N\sum_{k=1}^N(\Delta_k-2)_+
    +2-\frac{B_N}{N}
    \longrightarrow0.
\]
\end{proof}

For positive-stepsize GD, $\Delta_N=h_{N-1}$, so any schedule attaining the tight constant uses normalized steps close to $2$ on all but a vanishing fraction of iterations. For a general linear-span method, the statement concerns only the scalar record process, not the full coefficient table.

\section{A general finite-to-anytime transfer principle}
\label{app:blocks}

The constructions below use two distinct consequences of the linear-span representation \eqref{eq:span}. First, a nonadaptive finite-horizon run can be embedded at arbitrary future iterations by copying its coefficient rows and assigning zero weight to unrelated gradients. By induction, these rows exactly reproduce the original finite-horizon trajectory. Consecutive embeddings alone already yield accelerated last iterates along an infinite subsequence.

Second, once an endpoint has been generated, its coefficient row can be reused at prescribed later iterations, reproducing the same point. This second operation is used only to increase the frequency of accelerated horizons. Since all block locations and coefficient rows are fixed in advance, both constructions define a single horizon-independent nonadaptive method. For example, if $y_1=x_0-\frac aL \nabla f(x_0)$ and $y_2=x_0-\frac bL \nabla f(x_0)-\frac cL\nabla f(y_1)$, then embedding $y_1,y_2$ at horizons $p,p+1$ amounts to setting $\beta_{0,p}=a$, $\beta_{0,p+1}=b$, and $\beta_{p,p+1}=c$; any later row with $\beta_{0,q}=b$ and $\beta_{p,q}=c$ reproduces $y_2$.

\begin{theorem}[Transfer of finite-horizon endpoint rates]
\label{thm:block-transfer}
Suppose that, for some $C>0$ and $p>0$, every $m\ge1$ admits a nonadaptive $m$-step linear-span method $\A^{(m)}$ satisfying
\[
    \G_m(\A^{(m)})\le\frac{C}{(m+1)^p}.
\]
Then the following hold.

\emph{(i) Best-so-far.} There exists a horizon-independent nonadaptive method $\A$ such that
\[
    \G_N^{\mathrm{best}}(\A)\le\frac{C4^p}{(N+1)^p},
    \qquad N\ge1.
\]

\emph{(ii) Fixed density.} For every $r\in(0,1)$, there exist a horizon-independent nonadaptive method $\A$ and a set $S_r\subset\mathbb N$ with $\underline d(S_r)=r$ such that
\[
    \limsup_{\substack{N\to\infty\\N\in S_r}}N^p\G_N(\A)
    \le C2^p\left(\frac{1+r}{1-r}\right)^p.
\]

\emph{(iii) Density one.} For every nondecreasing $\omega:\mathbb N\to[1,\infty)$ with $\omega(N)\to\infty$, there exist a horizon-independent nonadaptive method $\A$ and a density-one set $S$ such that
\[
    \G_N(\A)\le\frac{\omega(N)}{N^p},
    \qquad N\in S.
\]

\end{theorem}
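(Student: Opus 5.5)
The plan is to build every method in the theorem from one embedding primitive, and then make a separate horizon-bookkeeping choice for each of (i)--(iii).

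\emph{Embedding primitive.} Fix a finite nonadaptive $m$-step method $\A^{(m)}$ with rows $(\beta^{(m)}_{i,k})$. Place it at global iterations $T+1,\dots,T+m$ as follows. The global row $T+k$ puts weight $\beta^{(m)}_{0,k}$ on $\nabla f(x_0)$, weight $\beta^{(m)}_{i,k}$ on $\nabla f(x_{T+i})$ for $1\le i<k$, and zero on every other gradient. An induction on $k$ shows that $x_{T+k}$ coincides with the $k$-th iterate of $\A^{(m)}$ started at $x_0$. Indeed, if the earlier embedded points coincide, their gradients coincide, so the linear combinations coincide. In particular $\|\nabla f(x_{T+m})\|^2\le \G_m(\A^{(m)})\,L(f(x_0)-f_\star)$ for every $f$. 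Likewise, copying the row $T+m$ at any later index $q$ reproduces the point $x_{T+m}$, hence also its gradient. All block positions are fixed in advance and no coefficient depends on $N$, so the concatenated table is a single horizon-independent nonadaptive method. The gradient guarantee of an embedded endpoint, or of any copy of it, is exactly that of $\A^{(m)}$.

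\emph{(i) Best-so-far.} Use dyadic epochs. Epoch $j\ge0$ runs $\A^{(2^j)}$ on the iterations $2^j,\dots,2^{j+1}-1$, so its endpoint sits at index $2^{j+1}-1$. For a horizon $N\ge1$, let $j$ be the largest index with $2^{j+1}-1\le N$, and set $p(N):=2^{j+1}-1$. Maximality gives $N+1<2^{j+2}$, hence $2^j+1>(N+1)/4$. Therefore
\[
\G_N^{\mathrm{best}}(\A)\le\G_{p(N)}(\A)\le \frac{C}{(2^j+1)^p}\le \frac{C4^p}{(N+1)^p}.
\]

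\emph{(iii) Density one.} Again use dyadic epochs $[2^k,2^{k+1})$, but only for $k\ge k_0$. In epoch $k$, embed $\A^{(m_k)}$ with
\[
m_k:=\Bigl\lceil 2^{k+1}\bigl(C/\omega(2^k)\bigr)^{1/p}\Bigr\rceil,
\]
and fill the remaining positions of the epoch with copies of the endpoint row. Here $k_0$ is chosen so that $m_k\le 2^{k}/2$ for all $k\ge k_0$, which is possible since $\omega\to\infty$.

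Let $S$ consist of the endpoint position and all copy positions in each epoch $k\ge k_0$. For $N\in S$ in epoch $k$ we have $N<2^{k+1}$ and $\omega(N)\ge\omega(2^k)$, so
\[
\G_N\le C/(m_k+1)^p\le \omega(2^k)/2^{(k+1)p}\le\omega(N)/N^p.
\]
For the density, the complement of $S$ in $\{1,\dots,M\}$ has at most $2^{k_0}+\sum_{k\le \log_2 M}m_k$ elements. Since $m_k=o(2^k)$, a Ces\`aro/Stolz argument on the dyadic sums shows this count is $o(M)$. This must hold uniformly in $M$, including for $M$ in the middle of an epoch's OGM segment; it does, because that segment has length $m_k=o(M)$ there.

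\emph{(ii) Fixed density.} Same design, but now with $m_k\approx(1-\rho)L_k$ inside epochs of lengths $L_k$ growing geometrically. Take $S$ to be the copy positions. The guarantee on $S$ then follows from $N\le T_{k+1}$ and $m_k+1\gtrsim$ (a fixed fraction of $T_{k+1}$). The constant $2^p\bigl((1+r)/(1-r)\bigr)^p$ should emerge from the ratio $T_{k+1}/m_k$ once the parameters are tuned.

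\emph{Main obstacle.} I expect the main difficulty to be exactly this tuning in (ii). The lower density is attained at prefixes $M$ that end just before a copy segment begins, not at epoch ends. The epoch ratio and the fraction $\rho$ therefore have to be solved jointly so that $\liminf$ of the prefix fraction equals $r$ exactly. I would first try epochs with $T_{k+1}=2T_k$ and compute the prefix minimum explicitly. If necessary, I would also drop some copy positions from $S$ to lower the density to exactly $r$; this is allowed, since it only weakens the claim on $S$.
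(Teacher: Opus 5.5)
Your parts (i) and (iii) are correct and follow the paper's proof. Both use the same embedding of fresh finite-horizon runs on dyadic blocks. In (iii), your $m_k=\lceil 2^{k+1}(C/\omega(2^k))^{1/p}\rceil$ is exactly the paper's $\lceil\varepsilon_k2^k\rceil$ with $\varepsilon_k=(2^pC/\omega(2^k))^{1/p}$. In (i), you return the most recently completed block rather than the one before it, which is equally valid.

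The gap is part (ii). There you only describe a design. You never fix the parameters, compute the lower density, or check the constant, so the claimed bound remains a conjecture (``should emerge once the parameters are tuned'').

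The missing computation is short, and the joint tuning you worry about is unnecessary. Keep the dyadic epochs $E_k=\{2^k,\dots,2^{k+1}-1\}$ and a constant fraction, $m_k=\lceil\varepsilon 2^k\rceil$.
\begin{itemize}
\item \emph{Rate on $S$.} On a good horizon $N\in E_k$ you get $\G_N\le C/(m_k+1)^p\le C/(\varepsilon2^k)^p<C2^p/(\varepsilon N)^p$, since $N<2^{k+1}$.
\item \emph{Where the liminf sits.} As you correctly observed, the prefix fraction of good horizons decreases during each preparation phase and increases afterwards. So the liminf is taken along the last preparation horizons $M_k=2^k+m_k-2=(1+\varepsilon)2^k+O(1)$.
\item \emph{Density.} At $M_k$ the number of good horizons is $\sum_{j<k}(2^j-m_j+1)=(1-\varepsilon)2^k+O(k)$. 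Hence $\underline d(S)=(1-\varepsilon)/(1+\varepsilon)$.
\end{itemize}
Choosing $\varepsilon=(1-r)/(1+r)$ gives $\underline d(S)=r$ exactly and $N^p\G_N\le C2^p\bigl((1+r)/(1-r)\bigr)^p$ for every $N\in S$, not just in the limsup. The target constant thus comes out exactly with ratio-$2$ epochs, and your fallback of discarding good horizons is not needed.
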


\begin{proof}
\emph{(i) Best-so-far.} Embed $\A^{(1)},\A^{(2)},\A^{(4)},\ldots$ on consecutive blocks of lengths $1,2,4,\ldots$. If $2^j\le N<2^{j+1}$ with $j\ge1$, then the preceding block, of length $2^{j-1}$, has already been completed. Its endpoint is among $x_0,\ldots,x_N$ and therefore
\[
    \G_N^{\mathrm{best}}(\A)\le\frac{C}{(2^{j-1}+1)^p}\le\frac{C4^p}{(N+1)^p},
\]
where the second inequality follows from $N+1\le2^{j+1}\le4(2^{j-1}+1)$. The case $N=1$ follows from the first block. The endpoint used depends only on $N$ and on the nonadaptive block structure, not on the observed gradient norms.

\medskip
\noindent\emph{(ii)--(iii) Frequent last iterates.} For the last-iterate constructions, partition the positive horizons into dyadic epochs
\[
    E_k:=\{2^k,\ldots,2^{k+1}-1\},
    \qquad k\ge0.
\]
Choose a preparation fraction $\varepsilon_k\in(0,1]$ and set
\[
    m_k:=\lceil\varepsilon_k2^k\rceil.
\]
During the first $m_k$ iterations of epoch $E_k$, embed the $m_k$-step method $\A^{(m_k)}$. Let $z_k$ be its endpoint. Once $z_k$ has been produced, reproduce it at every remaining iteration of the epoch. Thus each epoch has the form
\[
    E_k:\qquad
    \underbrace{\text{preparation}}_{m_k-1}
    \quad\big|\quad
    \underbrace{z_k,\ z_k,\ldots,z_k}_{2^k-m_k+1\text{ good horizons}}.
\]
Let $S$ denote the set of good horizons. For $N\in S\cap E_k$, one has $x_N=z_k$, and hence
\[
    \G_N(\A)\le\frac{C}{(m_k+1)^p}.
\]
Since $m_k+1\ge\varepsilon_k2^k$ and $N<2^{k+1}$, this yields
\begin{equation}
    \G_N(\A)\le\frac{C2^p}{\varepsilon_k^pN^p},
    \qquad N\in S\cap E_k.
    \label{eq:block-reuse}
\end{equation}

For \emph{(ii)}, take $\varepsilon_k\equiv\varepsilon\in(0,1)$. During the preparation part of an epoch, no new horizon belongs to $S$, so the running density decreases; after the endpoint is produced, every remaining horizon is good, so the density increases. The asymptotic minimum therefore occurs immediately before the endpoint of a new block is produced. Let $M_k:=2^k+m_k-2$ denote this last preparation horizon. The completed epochs $E_0,\ldots,E_{k-1}$ contain
\[
    \sum_{j=0}^{k-1}(2^j-m_j+1)
    =(1-\varepsilon)2^k+o(2^k)
\]
good horizons, whereas
\[
    M_k=(1+\varepsilon)2^k+o(2^k).
\]
Consequently,
\[
    \underline d(S)=\frac{1-\varepsilon}{1+\varepsilon}.
\]
Choosing $\varepsilon=(1-r)/(1+r)$ gives $\underline d(S)=r$, while \eqref{eq:block-reuse} gives
\[
    \limsup_{\substack{N\to\infty\\N\in S}}N^p\G_N(\A)
    \le C2^p\left(\frac{1+r}{1-r}\right)^p.
\]

Apart from the first endpoint of each epoch, the good horizons are precisely the reproduced ones. Their within-epoch fraction is asymptotically
\[
    1-\varepsilon=\frac{2r}{1+r}.
\]
Hence frequent accelerated horizons require frequent endpoint reproduction in this construction; the reproduced fraction becomes small only when the target lower density $r$ does.

For \emph{(iii)}, choose $k_0$ sufficiently large that
$\omega(2^k)\ge 2^pC$ for every $k\ge k_0$. Define the method
arbitrarily on the finitely many earlier epochs and omit their horizons
from $S$. For every $k\ge k_0$, set
\[
    \varepsilon_k
    :=\left(\frac{2^pC}{\omega(2^k)}\right)^{1/p}
    \in(0,1],
    \qquad
    m_k:=\lceil\varepsilon_k2^k\rceil.
\]
Since $\omega(2^k)\to\infty$, we have $\varepsilon_k\to0$ and hence
$m_k=o(2^k)$. In fact, the total number of preparation horizons up to
epoch $k$ is also negligible:
\[
    \sum_{j=k_0}^k m_j=o(2^k).
\]
Indeed, for every $\eta>0$, one has $m_j\le\eta2^j$ for all sufficiently
large $j$, and therefore
\[
    \sum_{j=k_0}^k m_j
    \le O(1)+\eta\sum_{j=k_0}^k2^j
    \le O(1)+2\eta\,2^k.
\]
Since $\eta>0$ is arbitrary, the claim follows. Thus, for any $M\in E_k$,
\[
    |\{1,\ldots,M\}\setminus S|
    \le O(1)+\sum_{j=k_0}^k m_j
    =o(2^k)
    =o(M),
\]
and therefore $S$ has density one. Finally, if $N\in S\cap E_k$, then
\eqref{eq:block-reuse} and the monotonicity of $\omega$ give
\[
    \G_N(\A)
    \le\frac{C2^p}{\varepsilon_k^pN^p}
    =\frac{\omega(2^k)}{N^p}
    \le\frac{\omega(N)}{N^p}.
\]
\end{proof}

\paragraph{Application to OGM-G.}
By \citet[Theorem~6.1]{kimfessler2021}, OGM-G satisfies
\[
    \G_m(\A^{(m)})\le\frac{4}{(m+1)^2}
\]
for every prescribed horizon $m\ge1$. Applying Theorem~\ref{thm:block-transfer} with $C=4$ and $p=2$ proves Theorem~\ref{thm:frequency} and Proposition~\ref{prop:best}, and also gives the fixed-density refinement in part~(ii).

\paragraph{Explicit spikes at restart horizons.}
Each fresh block starts from $x_0$, so its first iterate has the form $x_1=x_0-\frac{\beta}{L}\nabla f(x_0)$. Every such one-step method satisfies $\G_1\ge1/2$. Indeed, if $\beta\le0$, the quadratic $f(x)=Lx^2/2$ gives $\G_1\ge2(1-\beta)^2\ge2$. If $\beta>0$, the same quadratic and Lemma~\ref{lem:plateau} give
\[
    \G_1\ge
    \max\left\{2(1-\beta)^2,\frac1{\beta+1/2}\right\}
    \ge\frac12,
\]
with equality at $\beta=3/2$. In the construction of Theorem~\ref{thm:frequency}, a fresh block starts at every $N=2^k$ with $k \ge k_0$, so $\G_{2^k}\ge1/2$ and therefore
\[
    2^k\G_{2^k}\ge2^{k-1}\to\infty.
\]
Thus the exceptional restart horizons explicitly realize the divergence required by Corollary~\ref{cor:dichotomy}.

\paragraph{Accelerated blocks require large record displacement.}
More generally, combining Lemma~\ref{lem:plateau} with $\G_m(\A^{(m)})\le C/(m+1)^p$ gives
\[
    B_m\ge\frac{(m+1)^p}{C}-\frac12.
\]
For OGM-G, this yields $B_m\ge(m+1)^2/4-1/2$: an accelerated finite-horizon block necessarily generates quadratic record displacement. The density-one construction alternates such preparation phases with long periods in which reproducing the endpoint stalls the record.
\end{document}